\newtheorem{thm}{Theorem}[section] 
\newtheorem{algo}[thm]{Algorithm}
\newtheorem{prop}[thm]{Proposition}
\newcommand\operA[2]{{\if!#2!\operatorname{#1}\else{\operatorname{#1}_{#2}^{\phantom{I}}}\fi}} 
\newcommand\set[1]{\{#1\}}
\newcommand\Cref[1]{{Corollary~\ref{#1}}}%
\def\tr{{\operatorname{Tr}}}
\def\norm{{\operatorname{N}}}
\newcommand{\Trace}[1][]{\if!#1!\operatorname{Tr}\else{\operatorname{Tr}_{#1}^{\phantom{I}}}\fi} 
\long\def\forget#1\forgotten{{}} %
\def\({\left(}
\def\){\right)}
\newif\iffurther
\newif\ifXY 
\journal{??}
\begin{document}

\begin{frontmatter}

\title{Quaternion quadratic equations in characteristic 2}

\author{Adam Chapman\corref{cor2}}
\ead{adam1chapman@yahoo.com}
\cortext[cor2]{The author is supported by Wallonie-Bruxelles International.}
\address{ICTEAM Institute, Universit\'{e} Catholique de Louvain, B-1348 Louvain-la-Neuve, Belgium}

\begin{abstract}
In this paper we present a solution for any standard quaternion quadratic equation, i.e. an equation of the form $z^2+\mu z+\nu=0$ where $\mu$ and $\nu$ belong to some quaternion division algebra $Q$ over some field $F$, assuming the characteristic of $F$ is $2$.
\end{abstract}

\begin{keyword}
Quaternion algebra, field of characteristic 2, quadratic equation
\MSC[2010] primary 16K20; secondary 11R52
\end{keyword}

\end{frontmatter}

\section{Introduction}
In \cite{HuangSo}, Huang and So presented a solution for any quadratic equation $z^2+\mu z+\nu=0$ over Hamilton's quaternion algebra $\mathbb{H}$.
In \cite{Abrate}, Abrate generalized that result for any quaternion algebra over any field of characteristic not $2$.
Being able to solve a quaternion quadratic equation has proved useful, for example in computing the left eigenvalues of a $2 \times 2$ quaternion matrix (see \cite{Wood}).
In this paper we shall present a solution for such equations over a quaternion division algebra over a field of characteristic $2$.

Let $F$ be a field of characteristic 2.
A quaternion algebra $Q$ over $F$ is a four dimensional algebra $F+F x+F y+F x y$ where $x$ and $y$ satisfy the relations
$$x^2+x=\alpha, y^2=\beta, x y+y x=y$$
for some $\alpha \in F$ and $\beta \in F^\times$.
Every central simple algebra over $F$ of dimension $4$ (or equivalently of degree $2$) is a quaternion algebra \cite[Chapter 8, Section 11]{Scharlau}.
The quaternion algebra is equipped with a canonical involution $\sigma$ defined by
$$\sigma(a+b x+c y+d x y)=a+b+b x+c y+d x y$$
for any $a,b,c,d \in F$.
For any element $q \in Q$, $\sigma(q)$ is called its ``conjugate".
The norm and trace of $q$ are defined to be $\norm(q)=q \sigma(q)$ and $\tr(q)=q+\sigma(q)$, both are in $F$.
These definitions coincide with the general definitions of the reduced norm and trace in central simple algebras.
For any $q \in Q$, $q^2+\tr(q) q+\norm(q)=0$, which means that $\tr(q)=0$ if and only if $q^2 \in F$.
The space $F+F y+F x y$ consists of all the elements of trace zero.

It is known that $Q$ is either a division algebra or the matrix algebra $M_2(F)$.
From now on we shall assume that $Q$ is a division algebra.
In particular it means that $F$ must be an infinite field, following \cite{Maclagan-Wedderburn}.
From \cite{Herstein} it is known that the quadratic equation $z^2+\mu z+\nu=0$ has either infinitely many roots or up to two roots.

The elements $z \in Q \setminus F$ which satisfy $z^2+z \in F$ are called ``Artin-Schreier", and the elements $z \in Q \setminus F$ which satisfy $z^2 \in F$ are called ``square-central".
In particular, in the description of the quaternion algebra above, $x$ is Artin-Schreier and $y$ is square-central.
It is pointed out in \cite[Chapter 8, Section 11]{Scharlau} that for any Artin-Schreier $x' \in Q$ there exists a square-central element $y' \in Q$ satisfying $x' y'+y' x'=y'$, and then $x'$ and $y'$ can replace $x$ and $y$ in the description of the quaternion algebra above. The canonical involution, norm and trace are independent of the choice of generators and therefore remain the same.
Given this Artin-Schreier element $x'$, $Q=V_0+V_1$ where $V_0=F+F x'$ and $V_1=F y'+F x' y'$. Every element in $V_0$ commutes with $x'$ and every element $t \in V_1$ satisfies $t x+x t=t$.

\begin{prop}
For any square-central element $y' \in Q$ there exists an Artin-Schreier element $x' \in Q$ satisfying $x' y'+y' x'=y'$.
\end{prop}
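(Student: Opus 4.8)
The plan is to recast the whole problem in terms of conjugation by $y'$ and then to read off the Artin--Schreier normalization from the conjugation invariance of the reduced norm, rather than from a direct trace computation.

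First I would record the relevant features of $y'$. Being square-central, $y'$ has trace zero and $y'^2=\beta'\in F^\times$, so conjugation $\phi(z)=y'z{y'}^{-1}$ is an $F$-algebra automorphism of $Q$ with $\phi^2=\mathrm{id}$. Right-multiplying the desired identity $x'y'+y'x'=y'$ by ${y'}^{-1}$ turns it into $x'+\phi(x')=1$, equivalently $(\phi+\mathrm{id})(x')=1$. Thus the entire task reduces to producing a preimage of $1$ under the $F$-linear map $\phi+\mathrm{id}$ and checking that this preimage is Artin--Schreier.

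Next I would settle existence by a soft dimension count. The fixed space $\ker(\phi-\mathrm{id})$ is the centralizer $\Ce[Q]{y'}=F+Fy'$, a two-dimensional maximal subfield since $Q$ is a division algebra and $y'\notin F$. Because $\phi^2=\mathrm{id}$ and $\mathrm{char}\,F=2$, we have $(\phi+\mathrm{id})^2=\phi^2+\mathrm{id}=0$, so $\im{(\phi+\mathrm{id})}\subseteq\ker(\phi+\mathrm{id})=\Ce[Q]{y'}$; rank--nullity then forces both subspaces to be two-dimensional, hence equal. As $1\in\Ce[Q]{y'}=\im{(\phi+\mathrm{id})}$, there exists $x'$ with $\phi(x')=x'+1$, and unwinding the conjugation recovers $x'y'+y'x'=y'$. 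Moreover $x'\notin F$, since $\phi$ fixes $F$ pointwise while $\phi(x')\ne x'$.

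The main obstacle is pinning down the Artin--Schreier condition $\tr(x')=1$, which resists a naive approach: every element of $\ker(\phi+\mathrm{id})=F+Fy'$ has trace zero, so adding kernel elements cannot change $\tr(x')$, and applying $\tr$ to $x'+\phi(x')=1$ yields only $0=0$ in characteristic $2$. Instead I would exploit the conjugation invariance of the reduced norm together with the identity $\norm(x'+1)=\norm(x')+\tr(x')+1$. Since $\phi$ is inner we have $\norm(\phi(x'))=\norm(x')$, and substituting $\phi(x')=x'+1$ gives $\norm(x')=\norm(x')+\tr(x')+1$, whence $\tr(x')=1$. Therefore $x'\notin F$ has trace $1$, i.e.\ $x'$ is Artin--Schreier, and it satisfies $x'y'+y'x'=y'$ by construction, which completes the argument.
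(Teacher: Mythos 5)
Your proof is correct, but it takes a genuinely different route from the paper's. The paper argues constructively: it picks any $z$ with $w=zy'+y'z\neq 0$ (possible since $y'$ is not central), notes $w$ commutes with $y'$ and is invertible, sets $x'=y'w^{-1}z$ explicitly, and then deduces $x'^2+x'\in F$ by showing this element commutes with both generators $x'$ and $y'$ and invoking the absence of proper division subalgebras. You instead linearize the whole problem: rewriting the target identity as $(\phi+\mathrm{id})(x')=1$ for the inner involution $\phi(z)=y'z y'^{-1}$, you get existence from $(\phi+\mathrm{id})^2=0$ in characteristic $2$ together with rank--nullity, which forces $\mathrm{Im}(\phi+\mathrm{id})=\ker(\phi+\mathrm{id})=F+Fy'\ni 1$; and you get the Artin--Schreier property from $\norm(\phi(x'))=\norm(x')$ combined with $\norm(x'+1)=\norm(x')+\tr(x')+1$, yielding $\tr(x')=1$. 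Both arguments lean on the division hypothesis in the same two places (invertibility, and the centralizer of $y'$ being exactly $F+Fy'$ -- your phrase ``two-dimensional maximal subfield'' is doing the same work as the paper's ``no nontrivial division subalgebras,'' and deserves the same one-line justification). What the paper's version buys is an explicit formula for $x'$, in keeping with the algorithmic flavor of the rest of the article; what yours buys is the elimination of the two ``straightforward computations'' left to the reader, and a slicker derivation of $\tr(x')=1$ that makes the equivalence between ``Artin--Schreier'' and ``non-central of reduced trace $1$'' explicit. One small presentational point: you should state that equivalence (it follows from $q^2+\tr(q)q+\norm(q)=0$) rather than leave it implicit, since the paper defines Artin--Schreier via $x'^2+x'\in F$, not via the trace.
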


\begin{proof}
By a straight-forward computation, for any $z \in Q$, $z y'+y' z$ commutes with $y'$.
If $z y'+y' z=0$ for every $z \in Q$ then $y'$ is central in $Q$, which means that $y' \in F$, contradictory to the assumption that $y'$ is square-central.
Therefore we can choose some $z$ for which $w=z y'+y' z \neq 0$.
Since $Q$ is a division algebra, $w$ is invertible.
Set $x'=y' w^{-1} z$. Then $x' y'+y' x'=y'$.
By a straight-forward computation, $x'^2+x'$ commutes with $y'$.
Since $Q$ is a division algebra of degree $2$ \cite{Albert}, it has no nontrivial division subalgebras, which means that the subalgebra generated by $x'$ and $y'$ is the whole algebra. Since $x'^2+x'$ commutes with $x'$ and $y'$, $x'^2+x' \in F$.
\end{proof}

\section{Quaternion Quadratic Equations}
Let $F$ be a field of characteristic 2 and $Q$ be a quaternion division algebra over $F$.
Let $z^2+\mu z+\nu=0$ be the equation under discussion.
The coefficients $\mu$ and $\nu$ are arbitrary elements in $Q$.

As a set, $Q$ can be written as the disjoint union of $\set{0}$, $F^\times$, the set of square-central elements, and all the other elements.
We shall denote the latter by $Q'$.
Therefore, $\mu$ belongs to exactly one of these subsets.
If $\mu \in F^\times$ then by dividing the equation by $\mu^2$ we get $(\mu^{-1} z)^2+(\mu^{-1} z)+\mu^{-2} \nu=0$, which means that in this situation it suffices to be able to solve the case of $\mu=1$.
If $\mu \in Q'$ then it satisfies $\mu^2+\eta \mu \in F$ for some $\eta \in F^\times$.
Consequently $(\eta^{-1} \mu)^2+(\eta^{-1} \mu) \in F$, which means that $\eta^{-1} \mu$ is Artin-Schreier and the equation can be expressed as $(\eta^{-1} z)^2+(\eta^{-1} \mu) (\eta^{-1} z)+\eta^{-2} \nu=0$, and so in this situation it suffices to be able to solve the case of an Artin-Schreier $\mu$.

In conclusion, it suffices to be able to solve the following cases:
\begin{enumerate}
\item $\mu$ is Artin-Schreier.
\item $\mu$ is square-central.
\item $\mu=1$.
\item $\mu=0$.
\end{enumerate}

\section{$\mu$ is Artin-Schreier}

Assume that the equation is $z^2+\mu z+\nu=0$ for some $\nu \in Q$ and some Artin-Schreier element $\mu$ satisfying $\mu^2+\mu=\alpha$.
The element $\nu$ splits as $\nu_0+\nu_1$ where $\nu_0 \mu=\mu \nu_0$ and $\nu_1 \mu+\mu \nu_1=\nu_1$.
Furthermore, $\nu_0=\nu_{0,0}+\nu_{0,1} \mu$ where $\nu_{0,0},\nu_{0,1} \in F$.

\begin{thm}
If $\nu_1 \neq 0$ then all the elements $z \in Q$ satisfying $z^2+\mu z+\nu=0$ belong to the set
\begin{eqnarray*}
R & = & \{ b^2+b+\nu_{0,1}+b \mu+(b+\mu)^{-1} \nu_1 :  b \in F,\\
 & & (b^2+b+\nu_{0,1})^2+(b^2+b) \alpha+(b^2+b+\alpha)^{-1} \nu_1+\nu_{0,0}=0 \}.
\end{eqnarray*}
Otherwise, all the elements $z \in Q$ belong to the subfield $F[\mu]$, and therefore we can simply solve it as a quadratic equation over this field.
\end{thm}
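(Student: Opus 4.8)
The plan is to use the decomposition $Q = F[\mu] \oplus V_1$ attached to the Artin--Schreier element $\mu$, where $F[\mu]=F+F\mu$ is the commutative subfield centralizing $\mu$ and $V_1$ is the complement on which $t\mu+\mu t = t$. First I would write a prospective solution as $z = z_0 + z_1$ with $z_0 = a + b\mu \in F[\mu]$ ($a,b\in F$) and $z_1 \in V_1$, and record the structural facts I will lean on: $\mu$ commutes with $z_0$; $\mu z_1 + z_1\mu = z_1$; every element of $V_1$ has trace zero, so $z_1^2 = \norm(z_1) \in F$; and $F[\mu]\cdot V_1 \subseteq V_1$, which keeps the forthcoming expression $(b+\mu)^{-1}\nu_1$ inside $V_1$.

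Next I would expand $z^2 = z_0^2 + (z_0 z_1 + z_1 z_0) + z_1^2$. Using $z_0 = a+b\mu$ and $\mu z_1 + z_1\mu = z_1$ the cross term collapses to $z_0 z_1 + z_1 z_0 = b z_1$, so that $z_0^2 + z_1^2 \in F[\mu]$ while $bz_1 \in V_1$. Projecting the equation $z^2+\mu z+\nu = 0$ onto the two summands therefore gives
\begin{align*}
z_0^2 + z_1^2 + \mu z_0 + \nu_0 &= 0 \quad\text{in } F[\mu],\\
(b+\mu)z_1 + \nu_1 &= 0 \quad\text{in } V_1.
\end{align*}
Since $\mu \notin F$, the element $b+\mu$ is a nonzero element of the field $F[\mu]$, hence invertible in the division algebra $Q$, and the second equation solves as $z_1 = (b+\mu)^{-1}\nu_1$. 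This already pins down the $V_1$-part of any solution and accounts for the term $(b+\mu)^{-1}\nu_1$ in $R$.

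It then remains to unwind the $F[\mu]$-equation. I would compute $\mu^2 = \mu+\alpha$, so that $z_0^2 = a^2 + b^2\alpha + b^2\mu$ and $\mu z_0 = b\alpha + (a+b)\mu$, and evaluate $z_1^2$ by multiplicativity of the reduced norm: $z_1^2 = \norm(z_1) = \norm(b+\mu)^{-1}\norm(\nu_1) = (b^2+b+\alpha)^{-1}\norm(\nu_1)$, using $\norm(b+\mu) = (b+\mu)\sigma(b+\mu) = b^2+b+\alpha$. Writing $\nu_0 = \nu_{0,0} + \nu_{0,1}\mu$ and separating the coefficients of $1$ and $\mu$, the $\mu$-coefficient yields $a = b^2 + b + \nu_{0,1}$, and the constant coefficient yields exactly the scalar constraint defining $R$, namely $(b^2+b+\nu_{0,1})^2 + (b^2+b)\alpha + (b^2+b+\alpha)^{-1}\norm(\nu_1) + \nu_{0,0} = 0$. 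Substituting $a$ and $z_1$ back shows any solution equals $b^2+b+\nu_{0,1}+b\mu+(b+\mu)^{-1}\nu_1$ for such a $b$, i.e. lies in $R$. Finally, if $\nu_1 = 0$ the $V_1$-equation reads $(b+\mu)z_1 = 0$ and forces $z_1 = 0$ by invertibility of $b+\mu$, so $z \in F[\mu]$ and the problem degenerates to a quadratic over the field $F[\mu]$. I expect the only genuine care to lie in the noncommutative bookkeeping of the cross term and in recognizing the $V_1$-equation as \emph{left}-linear in $z_1$; the norm evaluation of $z_1^2$ is the one truly algebra-specific input, and it is what produces the $(b^2+b+\alpha)^{-1}$ factor in the defining constraint.
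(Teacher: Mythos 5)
Your proposal is correct and follows essentially the same route as the paper: decompose $z=z_0+z_1$ along $Q=F[\mu]\oplus V_1$, reduce the $V_1$-projection to $(b+\mu)z_1=\nu_1$, and substitute back into the $F[\mu]$-projection to extract $a=b^2+b+\nu_{0,1}$ and the scalar constraint; the only cosmetic difference is that you evaluate $z_1^2$ via multiplicativity of the reduced norm, whereas the paper squares $(b+\mu)z_1=\nu_1$ directly using $z_1(b+\mu)=(b+\mu+1)z_1$. Note that your constraint correctly contains $\norm(\nu_1)=\nu_1^2$ where the theorem's statement writes $\nu_1$; since that constraint must be an equation in $F$ and $\nu_1\notin F$, this looks like a typo in the paper rather than a defect in your argument.
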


\begin{proof}
Since $\mu$ is Artin-Schreier, $z=z_0+z_1$ where $z_0 \mu=\mu z_0$ and $z_1 \mu+\mu z_1=z_1$.

Furthermore, the expression $I=z^2+\mu z+\nu$ splits into two parts $I_0+I_1$ such that $I_0 \mu=\mu I_0$ and $I_1 \mu+\mu I_1=I_1$.

The equation then splits in the following way:
\begin{eqnarray*}
I_0 & = & z_0^2+z_1^2+\mu z_0+\nu_0=0\\
I_1 & = & z_0 z_1+z_1 z_0+\mu z_1+\nu_1=0
\end{eqnarray*}

Now, $z_0=a+b \mu$ for some $a,b \in F$. Consequently $z_0 z_1+z_1 z_0=b z_1$. Moreover, $z_0^2=a^2+b^2 \mu+b^2 \alpha$ and $\mu z_0=a \mu+b \mu+b \alpha$
Similarly $\nu_0=\nu_{0,0}+\nu_{0,1} \mu$ for some $\nu_{0,0},\nu_{0,1} \in F$.

The second part of the equations therefore becomes
$b z_1+\mu z_1+\nu_1=0$, which means that $(b+\mu) z_1=\nu_1$.
Now, $((b+\mu) z_1)^2=(b+\mu) z_1 (b+\mu) z_1=(b+\mu) (b+\mu+1) z_1^2=(b^2+b+\alpha) z_1^2$.
If $\nu_1 \neq 0$ then $z_1^2 \neq 0$ and $b^2+b+\alpha \neq 0$, and we obtain $z_1^2=(b^2+b+\alpha)^{-1} \nu_1$.

If $\nu_1=0$ then either $z_1 = 0$ or $b^2+b+\alpha=0$. Since $Q$ is a division algebra, there is no $b \in F$ for which $b^2+b+\alpha=0$, and therefore $z_1=0$, and all the elements $z \in Q$ satisfying $z^2+\mu z+\nu=0$ belong to $F[\mu]$, which means that the equation can be simply solved in the subfield $F[\mu]$.

Assume $\nu_1 \neq 0$.

The first part splits again $I_0=I_{0,0}+I_{0,1}$ where $I_{0,0} \in F$ and $I_{0,1} \in F \mu$.
It splits in the following way:
\begin{eqnarray*}
I_{0,0} & = & a^2+b^2 \alpha+(b^2+b+\alpha)^{-1} \nu_1+b \alpha+\nu_{0,0}=0\\
I_{0,1} & = & b^2 \mu+(a+b) \mu+\nu_{0,1} \mu=0
\end{eqnarray*}

From $I_{0,1}$ we obtain $a=b^2+b+\nu_{0,1}$.
By substituting that in $I_{0,0}$ we obtain $(b^2+b+\nu_{0,1})^2+(b^2+b) \alpha+(b^2+b+\alpha)^{-1} \nu_1+\nu_{0,0}=0$.
\end{proof}

As a result we have the following algorithm for calculating the roots of the equation $z^2+\mu z+\nu=0$ where $\mu$ is Artin-Schreier and $\nu \not \in F[\mu]$:

\begin{algo}
\begin{enumerate}
\item Calculate all the elements $t \in F$ satisfying $(t+\nu_{0,1})^2+t \alpha+(t+\alpha)^{-1} \nu_1+\nu_{0,0}=0$.
\item For each such $t$, find all the elements $b \in F$ satisfying $b^2+b=t$.
\item For each such $b$ (there should be up to $6$ of those in total), substitute the element $b^2+b+\nu_{0,1}+b \mu+(b+\mu)^{-1} \nu_1$ in the original equation to check whether it is really a root. The set of roots consists of all the elements who passed the substitution test.
\end{enumerate}
\end{algo}

\section{$\mu$ is square-central}

Assume $\mu$ is square-central, then there exists some Artin-Schreier $\theta$ satisfying $\theta \mu+\mu \theta=\mu$.
In particular, $\mu^2=\beta$ and $\theta^2+\theta=\alpha$ for some $\alpha,\beta \in F$.

The element $\nu$ splits into $\nu_0+\nu_1$ where $\nu_0 \theta=\theta \nu_0$ and $\nu_1 \theta+\theta \nu_1=\nu_1$.
Furthermore, $\nu_0=\nu_{0,0}+\nu_{0,1} \theta$ and $\nu_1=\nu_{1,0} \mu+\nu_{1,1} \mu \theta$.

\begin{thm}
All the elements $z \in Q$ satisfying $z^2+\mu z+\nu=0$ belong to
\begin{eqnarray*}
R & = & \{b c+\nu_{1,0}+b \theta+c \mu+\beta^{-1} (b^2+\nu_{0,1}) \mu \theta : b,c \in F,\\
 & & \beta^{-1} (b^2+\nu_{0,1}) b+b+\nu_{1,1}=0,\\
 & & (b c+\nu_{1,0})^2+b^2 \alpha+\beta (c^2+c d+\alpha d^2)+c \beta+\nu_{0,0}=0\}.
\end{eqnarray*}
\end{thm}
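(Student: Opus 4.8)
The plan is to imitate the splitting method of the Artin--Schreier case, now using the auxiliary Artin--Schreier element $\theta$ to grade $Q$. First I would record the multiplication rules $\theta^2=\theta+\alpha$, $\mu^2=\beta$ and $\theta\mu=\mu+\mu\theta$ (the last being the relation $\theta\mu+\mu\theta=\mu$ read in characteristic $2$), and note that $\set{1,\theta,\mu,\mu\theta}$ is an $F$-basis of $Q$ with $V_0=F+F\theta$ and $V_1=F\mu+F\mu\theta$. A short preliminary check establishes the $\Z/2$-grading $V_0V_0\sub V_0$, $V_0V_1\sub V_1$, $V_1V_0\sub V_1$, $V_1V_1\sub V_0$; in particular $\mu^2=\beta$ and $(\mu\theta)^2=\alpha\beta$ land in $F$.

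Next I would write a prospective solution as $z=z_0+z_1$ with $z_0=a+b\theta\in V_0$ and $z_1=c\mu+d\mu\theta\in V_1$ for unknowns $a,b,c,d\in F$, and split $I=z^2+\mu z+\nu$ into its homogeneous parts. The structural difference from the previous section is that here $\mu\in V_1$, so $\mu z_1\in V_1V_1\sub V_0$ while $\mu z_0\in V_1V_0\sub V_1$; hence the equation splits as
\begin{eqnarray*}
I_0 & = & z_0^2+z_1^2+\mu z_1+\nu_0=0,\\
I_1 & = & z_0z_1+z_1z_0+\mu z_0+\nu_1=0.
\end{eqnarray*}

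I would then expand each part into its two scalar coordinates. Using $z_0z_1+z_1z_0=b(\theta z_1+z_1\theta)=bz_1$ together with $\mu z_0=a\mu+b\mu\theta$, the equation $I_1=0$ yields the two $F$-equations $a+bc+\nu_{1,0}=0$ and $bd+b+\nu_{1,1}=0$: the first pins down the constant coordinate as $a=bc+\nu_{1,0}$, and the second is the first constraint defining $R$. For $I_0=0$ I would compute $z_0^2=a^2+b^2\alpha+b^2\theta$, $z_1^2=\beta(c^2+cd+\alpha d^2)\in F$ and $\mu z_1=c\beta+d\beta\theta$; its $\theta$-coordinate reads $b^2+d\beta+\nu_{0,1}=0$, forcing $d=\beta^{-1}(b^2+\nu_{0,1})$ (the $\mu\theta$-coordinate recorded in $R$), while its constant coordinate gives $a^2+b^2\alpha+\beta(c^2+cd+\alpha d^2)+c\beta+\nu_{0,0}=0$. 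Substituting $a=bc+\nu_{1,0}$ into this last equation produces the second constraint of $R$, so every root lies in $R$.

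The argument is computational rather than conceptual, and the main obstacle is bookkeeping: one must keep the four noncommutative products straight—especially $z_1^2$ and the product $\mu\theta\cdot\mu\theta$—by repeated use of $\theta\mu=\mu+\mu\theta$, and must correctly recognize that the grading sends $\mu z_1$ into $V_0$ (not $V_1$). It is precisely this placement that routes $\mu z_1$ into $I_0$ and is responsible for the $\beta$-weighted shape of the final equation.
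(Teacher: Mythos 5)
Your proposal is correct and follows essentially the same route as the paper: the same $\theta$-grading of $Q$ into $V_0=F+F\theta$ and $V_1=F\mu+F\mu\theta$, the same split of $I$ into $I_0$ and $I_1$ (with $\mu z_1$ correctly landing in $I_0$ and $\mu z_0$ in $I_1$), and the same four scalar equations solved by the same substitutions $d=\beta^{-1}(b^2+\nu_{0,1})$ and $a=bc+\nu_{1,0}$. The explicit verification of the multiplication rules and of $(\mu\theta)^2=\alpha\beta$ is a welcome addition but does not change the argument.
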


\begin{proof}
The element $z$ splits into $z_0+z_1$ where $z_0 \theta=\theta z_0$ and $z_1 \theta+\theta z_1=z_1$.

The expression $I=z^2+\mu z+\nu$ splits into $I_0+I_1$ where $I_0 \theta=\theta I_0$ and $I_1 \theta+\theta I_1=I_1$.

The equation correspondingly splits into two:
\begin{eqnarray*}
I_0 & = & z_0^2+z_1^2+\mu z_1+\nu_0=0\\
I_1 & = & z_0 z_1+z_1 z_0+\mu z_0+\nu_1=0
\end{eqnarray*}

Now, $z_0=a+b \theta$ and $z_1=c \mu+d \mu \theta$ for some $a,b,c,d \in F$.

$z_0 z_1+z_1 z_0=b z_1=b c \mu+b d \mu \theta$.

$\mu z_0=a \mu+b \mu \theta$, $\mu z_1=c \beta+d \beta \theta$.

$z_0^2=a^2+b^2 \theta+b^2 \alpha$, $z_1^2=\beta (c^2+c d+\alpha d^2)$.

Each of the parts $I_0$ and $I_1$ splits again into two parts $I_0=I_{0,0}+I_{0,1} \theta$ and $I_1=I_{1,0} \mu+I_{1,1} \mu \theta$ where $I_{0,0},I_{0,1},I_{1,0},I_{1,1} \in F$.

Consequently we have the following system of four equations:

\begin{eqnarray*}
I_{0,0} & = & a^2+b^2 \alpha+\beta (c^2+c d+\alpha d^2)+c \beta+\nu_{0,0}=0\\
I_{0,1} & = & b^2+d \beta+\nu_{0,1}=0\\
I_{1,0} & = & b c+a+\nu_{1,0}=0\\
I_{1,1} & = & b d+b+\nu_{1,1}=0
\end{eqnarray*}

From $I_{0,1}$ we obtain $d=\beta^{-1} (b^2+\nu_{0,1})$.
Substituting that in $I_{1,1}$, we obtain $\beta^{-1} (b^2+\nu_{0,1}) b+b+\nu_{1,1}=0$.

From $I_{1,0}$ we obtain $a=b c+\nu_{1,0}$, and by substituting that in equation $I_{0,0}$ we obtain $(b c+\nu_{1,0})^2+b^2 \alpha+\beta (c^2+c d+\alpha d^2)+c \beta+\nu_{0,0}=0$.
\end{proof}

As a result we have the following algorithm for calculating the roots of the equation $z^2+\mu z+\nu=0$ where $\mu$ is square-central:

\begin{algo}
\begin{enumerate}
\item Calculate all the elements $b \in F$ satisfying $\beta^{-1} (b^2+\nu_{0,1}) b+b+\nu_{1,1}=0$.
\item For each such $b$, find all the elements $c \in F$ satisfying $(b c+\nu_{1,0})^2+b^2 \alpha+\beta (c^2+c d+\alpha d^2)+c \beta+\nu_{0,0}=0$.
\item For each such pair $b$ and $c$ (there should be up to $6$ of those in total), substitute the element $b c+\nu_{1,0}+b \theta+c \mu+\beta^{-1} (b^2+\nu_{0,1}) \mu \theta$ in the original equation to check whether it is really a root. The set of roots consists of all the elements who passed the substitution test.
\end{enumerate}
\end{algo}

\section{$\mu=1$}
Every element in $\nu \in Q$ is one of the following: central, square-central or $n x$ for some Artin-Schreier element $x$ and $n \in F^\times$.

\begin{thm}
\begin{enumerate}
\item If $\nu \in F$ then all the elements $z \in Q$ satisfying $z^2+z+\nu=0$ belong to $F$.
\item If $\nu$ is square-central then the elements $z \in Q$ satisfying $z^2+z+\nu=0$ are all the elements of the form $a+\nu$ where $a$ satisfies $a^2+a+\nu^2=0$.
\item If $\nu=n x$ for some Artin-Schreier element $x$ and $n \in F^\times$ then all the elements $z \in Q$ satisfying $z^2+z+\nu=0$ belong to $F[\nu]$.
\end{enumerate}
\end{thm}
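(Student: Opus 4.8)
The plan is to handle all three cases at once by playing the equation off against the reduced characteristic polynomial. Any root $z \in Q$ satisfies both $z^2 + z + \nu = 0$ and $z^2 + \tr(z) z + \norm(z) = 0$; adding these in characteristic $2$ eliminates the quadratic term and leaves
\[
(1 + \tr(z)) z = \nu + \norm(z).
\]
Since $\norm(z) \in F$, the right-hand side lies in $F + F\nu$. I would then split on linear independence: either $z \in F$, or $\{1, z\}$ is $F$-linearly independent and the displayed identity forces both coefficients to vanish, i.e. $\tr(z) = 1$ and $\norm(z) = \nu$. Note also that any $z \in F$ has $\tr(z) = 0$, so a root of trace $1$ is automatically non-central.

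For parts 2 and 3 the hypothesis $\nu \notin F$ does the work: the branch $\tr(z) = 1$ would give $\nu = \norm(z) \in F$, a contradiction, so $\tr(z) \neq 1$, hence $1 + \tr(z) \in F^\times$ and
\[
z = (1 + \tr(z))^{-1}(\nu + \norm(z)) \in F + F\nu = F[\nu].
\]
Every root thus lands in the commutative subfield $F[\nu]$, reducing the problem to an ordinary quadratic over a field. In part 2 I would write $z = a + t\nu$ with $a,t \in F$; since $a,t$ are central the cross terms drop out, expanding forces $t = 1$, and the constant part gives $a^2 + a + \nu^2 = 0$, which is exactly the stated parametrization. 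In part 3, $F[\nu]$ is a separable quadratic extension and $z^2 + z + \nu = 0$ is solved inside it as an Artin-Schreier equation.

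Part 1 is where I expect the real difficulty. With $\nu \in F$ the identity still gives $z \in F$ whenever $\tr(z) \neq 1$, but now the equality $\norm(z) = \nu$ no longer contradicts $\nu \in F$, so the trace-one branch survives. The hard part will be analysing exactly this branch: its elements are precisely the Artin-Schreier $z$ with $z^2 + z = \nu$, and one must decide, using that $Q$ is a division algebra, whether such elements can be present and how they sit relative to $F$. Once the trace is pinned down in this way, the location of all roots for part 1 follows from the same linear bookkeeping used above.
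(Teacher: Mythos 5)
Your reduction via the reduced characteristic polynomial is correct for parts 2 and 3, and it is a genuinely different route from the paper's. The paper fixes an Artin--Schreier element, splits $z=z_0+z_1$ into commuting and ``anti-commuting'' components, and solves the resulting component equations case by case; you instead add $z^2+z+\nu=0$ to the identity $z^2+\tr(z)z+\norm(z)=0$ to get $(1+\tr(z))z=\nu+\norm(z)$, and observe that for $\nu\notin F$ the branch $\tr(z)=1$ would force $\nu=\norm(z)\in F$, so $1+\tr(z)\in F^\times$ and every root lies in $F+F\nu=F[\nu]$. This handles parts 2 and 3 in one stroke, uniformly in $\nu\notin F$, and your computation inside $F[\nu]$ for part 2 (forcing $t=1$ and $a^2+a+\nu^2=0$) reproduces the paper's answer with less work. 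What your approach buys is brevity and generality; what the paper's decomposition buys is an explicit algorithmic parametrization in the harder sections, but for $\mu=1$ your method is cleaner.

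Part 1 you leave unfinished, and this gap cannot be closed, because part 1 of the theorem is false as stated. Your identity correctly isolates the surviving branch $\tr(z)=1$, $\norm(z)=\nu$; but by the very identity $z^2+\tr(z)z+\norm(z)=0$, \emph{every} element with $\tr(z)=1$ and $\norm(z)=\nu$ is automatically a root, and such non-central elements exist for suitable $\nu\in F$. Concretely, the generator $x$ with $x^2+x=\alpha$ satisfies $x^2+x+\alpha=0$, so for $\nu=\alpha$ the equation $z^2+z+\nu=0$ has the non-central root $x$, together with all of its (infinitely many) conjugates. The paper's own proof of Case 1 has a computational slip at exactly this point: with $b=1$ one has $z_0^2+z_0=(a^2+x+\alpha)+(a+x)=a^2+a+\alpha$, so the $x$-components cancel, $I_{0,1}=0$ rather than $1$, and no contradiction arises. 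Your instinct that the trace-one branch is ``the hard part'' is right; analysed honestly it shows the statement must be corrected, e.g.\ to: the roots are the roots of $T^2+T+\nu$ in $F$ together with all $z\in Q$ whose reduced characteristic polynomial equals $T^2+T+\nu$, the latter set being nonempty (and then infinite) precisely when $T^2+T+\nu$ is irreducible and the field $F[T]/(T^2+T+\nu)$ embeds in $Q$.
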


\begin{proof}

\textbf{Case 1}

Assume $\nu \in F$. Fix some Artin-Schreier element $x$.

Every element $z \in Q$ decomposes as $z_0+z_1$ where $z_0 x=x z_0$ and $z_1 x+x z_1=z_1$.
The expression $I=z^2+z+\nu$ decomposes similarly into $I_0+I_1$, and we obtain the following system of equations:

\begin{eqnarray*}
I_0 & = & z_0^2+z_1^2+z_0+\nu=0\\
I_1 & = & z_0 z_1+z_1 z_0+z_1=0
\end{eqnarray*}

The element $z_0$ is equal to $a+b x$ for some $a,b \in F$.
Substituting that in $I_1$ leaves $(b+1) z_1=0$.
If $z_1 \neq 0$ then $b=1$. Then $I_0=a^2+\alpha+1+z_1^2+a+x+\nu=0$.
However, $I_0=I_{0,0}+I_{0,1} x$ where $I_{0,0},I_{0,1} \in F$. In particular, $I_{0,1}=1$, which means that $1=0$ and that creates a contradiction.
Therefore $z$ commutes with $x$, which means that $z \in F[x]$. However, this is true for every Artin-Schreier element $x$, and therefore $z \in F$.

\bigskip
\textbf{Case 2}

Assume $\nu$ is square-central. Then for some fixed Artin-Schreier element $x$ satisfying $x^2+x=\alpha$ we have $x \nu+\nu x=\nu$.

Every element $z \in Q$ decomposes as $z_0+z_1$ where $z_0 x=x z_0$ and $z_1 x+x z_1=z_1$.
The expression $I=z^2+z+\nu$ decomposes similarly into $I_0+I_1$, and we obtain the following system of equations:
\begin{eqnarray*}
I_0 & = & z_0^2+z_1^2+z_0=0\\
I_1 & = & z_0 z_1+z_1 z_0+z_1+\nu=0
\end{eqnarray*}

$I_1=(b+1) z_1+\nu=0$, which means $\nu=(b+1) z_1$.
Since $\nu \neq 0$, $b+1$ is invertible, and $z_1=(b+1)^{-1} \nu$.

$I_0=a^2+b^2 \alpha+b^2 x+(b+1)^{-1} \nu^2+a+b x=0$

$I_0$ splits into $I_{0,0}+I_{0,1} x$, as follows
\begin{eqnarray*}
I_{0,0} & = & a^2+b^2 \alpha+(b+1)^{-1} \nu^2+a=0\\
I_{0,1} & = & b^2+b=0
\end{eqnarray*}

From $I_{0,1}$ we obtain either $b=0$ or $b=1$.
The second option is not possible, because $b+1$ is invertible.
Consequently $b=0$.

Then $I_{0,0}=a^2+a+\nu^2=0$.

In conclusion, the roots are the elements of the form $a+\nu$ where $a$ satisfies $a^2+a+\nu^2=0$.
\end{proof}

\bigskip
\textbf{Case 3}

Assume $\nu=n x$ for some Artin-Schreier $x$ and some $n \in F^\times$.
Every element $z \in Q$ decomposes as $z_0+z_1$ where $z_0 x=x z_0$ and $z_1 x+x z_1=z_1$.
The expression $I=z^2+z+\nu$ decomposes similarly into $I_0+I_1$, and we obtain the following system of equations:

\begin{eqnarray*}
I_0 & = & z_0^2+z_1^2+z_0+\nu=0\\
I_1 & = & z_0 z_1+z_1 z_0+z_1=0
\end{eqnarray*}

The element $z_0$ decomposes as $a+b x$ for some $a,b \in F$, and so from $I_1$ we obtain $(b+1) z_1=0$, which means that $z_1=0$ or $b=1$.
If $z_1 \neq 0$ then $b=1$.
Consequently $z_0^2=a^2+x+\alpha$ and $z_0=a+x$.

Now, $I_0=a^2+x+\alpha+z_1^2+a+x+\nu=a^2+\alpha+z_1^2+a+\nu=0$

However $I_0=I_{0,0}+I_{0,1} x$ where $I_{0,0},I_{0,1} \in F$. In particular, $I_{0,1}=\nu=0$, which creates a contradiction.
Consequently, it is enough to solve the equation over the field $F[\nu]$.

\section{$\mu=0$}

If $\nu \in F$ then the equation has a root only if $\nu=a^2 \alpha+(a b+b^2) \beta+c^2$ for some $a,b,c,\alpha \in F$ and $\beta \in F^\times$ where $Q=F[x,y : x^2+x=\alpha, y^2=\beta, x y+y x=y]$, because these are the squares of all the square-central and central elements in $F$.
If there are such $a,b,c$ then $z=a x y+b y+c$ is a root. Similarly, all its conjugates are roots as well, and there are infinitely many of them.

\begin{thm}
\begin{enumerate}
\item If $\nu$ is a square-central element then all the elements $z \in Q$ satisfying $z^2+\nu=0$ belong to $F[x]$ where $x$ satisfies $x \nu+\nu x=\nu$.
\item If $\nu=n x$ for some Artin-Schreier element $x$ and $n \in F^\times$ then all the elements $z \in Q$ satisfying $z^2+\nu=0$ belong to $F[\nu]$.
\end{enumerate}
\end{thm}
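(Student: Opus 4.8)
The plan is to reuse the component-equation method of the previous sections in both parts: split $Q=V_0+V_1$ with respect to a suitable Artin-Schreier element, write a putative root as $z=z_0+z_1$ with $z_0\in V_0$ and $z_1\in V_1$, and read off the $V_0$- and $V_1$-parts of the expression $z^2+\nu$. Throughout I would rely on the three computations that drive every such argument here: writing $z_0=a+bx$ gives $z_0^2=a^2+b^2\alpha+b^2x$, one has $z_0z_1+z_1z_0=bz_1$, and $z_1^2\in F$ (the last by the same square-central identity used in the square-central section, which yields $z_1^2=\beta(c^2+cd+\alpha d^2)$). Consequently the $V_0$-part of $z^2$ is $z_0^2+z_1^2$ and its $V_1$-part is $bz_1$, so everything collapses to two scalar equations.

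For part (1) the square-centrality of $\nu$ lets me invoke the opening Proposition to fix an Artin-Schreier element $x$ with $x\nu+\nu x=\nu$; set $x^2+x=\alpha$ and $\nu^2=\beta\in F^\times$, so that $V_0=F[x]$ and $\nu\in V_1$. The split of $z^2+\nu=0$ then reads $z_0^2+z_1^2=0$ together with $bz_1+\nu=0$. The coefficient of $x$ in the first equation is $b^2$, forcing $b=0$, and feeding this into the second gives $\nu=0$, contradicting square-centrality. Hence the equation has no roots whatsoever, and the assertion that every root lies in $F[x]$ holds vacuously; consistently, any $z\in F[x]$ has $z^2\in F[x]$ while $\nu\notin F[x]$, so $F[x]$ contains no root either. (As a sanity check, $z^2=\nu\notin F$ forces $\tr(z)\neq0$, so $z=\tr(z)^{-1}(\nu+\norm(z))\in F[\nu]$; but every element of $F[\nu]$ squares into $F$, contradicting $z^2=\nu$.)

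For part (2) I would write $\nu=nx$ with $x$ Artin-Schreier, $x^2+x=\alpha$, and $n\in F^\times$, so now $\nu\in V_0=F[x]=F[\nu]$. The same split gives $V_1$-part $bz_1$ and $V_0$-part $z_0^2+z_1^2+\nu$, so the equations are $bz_1=0$ and $z_0^2+z_1^2+\nu=0$. From the first, either $z_1=0$ or $b=0$; if $z_1\neq0$ then $b=0$, $z_0=a\in F$, and the coefficient of $x$ in the second equation is $n$, forcing $n=0$, a contradiction. Therefore $z_1=0$, that is $z\in F[\nu]$, and one finishes by solving $z^2=\nu$ inside this quadratic field in the usual way.

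The only genuinely delicate points are bookkeeping: correctly sorting the summands of $z^2$ into $V_0$ and $V_1$, and the identity $z_1^2\in F$, which is exactly what reduces each part to scalar equations. The one conceptual wrinkle is in part (1), where the decomposition does not exhibit a root lying in $F[x]$ but instead shows the component system is inconsistent; the statement is true only because the solution set is empty, and I would flag this explicitly rather than leave the impression that a nonempty family of roots is being located.
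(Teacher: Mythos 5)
Your proposal is correct and runs on the same machinery as the paper: decompose $Q=V_0+V_1$ with respect to an Artin--Schreier element adapted to $\nu$, write $z=z_0+z_1$ with $z_0=a+bx$, and reduce to the two component equations coming from $z^2=z_0^2+z_1^2+bz_1$. Your part (2) is essentially identical to the paper's. Where you genuinely diverge is part (1). The paper passes from $bz_1=\nu$ to ``either $z_1=0$ or $b=0$'', which is a non sequitur (since $\nu\neq 0$, both $b$ and $z_1$ would have to be nonzero), rules out the branch $z_1\neq 0$ by a zero-divisor argument, and concludes $z_1=0$, i.e.\ that one should ``solve over $F[x]$'' --- without remarking that $z_1=0$ together with $bz_1=\nu$ forces $\nu=0$. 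You instead read off $b^2=0$ from the coefficient of $x$ in $z_0^2+z_1^2=0$ (using $z_1^2\in F$), feed $b=0$ into $bz_1=\nu$, and obtain a contradiction outright: the component system is inconsistent, the solution set is empty, and the theorem holds vacuously. Your trace/norm cross-check ($z^2=\nu\notin F$ forces $\tr(z)\neq 0$, hence $z\in F[\nu]$, all of whose elements square into $F$) confirms this independently. This is the correct reading --- a square-central element has no square root in $Q$ --- and your explicit flagging of the vacuousness is an improvement over the paper's own proof, which both contains the faulty inference above and leaves the misleading impression that a nonempty set of roots is being located inside $F[x]$.
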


\begin{proof}
\textbf{Case 1}

Assume $\nu$ is square-central.
Then $x \nu+\nu x=\nu$ for some Artin-Schreier $x$.
As before, $z=z_0+z_1$ and we obtain the system
\begin{eqnarray*}
I_0 & = & z_0^2+z_1^2=0\\
I_1 & = & z_0 z_1+z_1 z_0+\nu=0
\end{eqnarray*}

From $I_1$ we obtain $b z_1=\nu$ which means that either $z_1=0$ or $b=0$.
If $z_1 \neq 0$ then $b=0$ and so $I_0=a^2+z_1^2=0$. But this means that $z_1+a$ is a zero divisor, and that creates a contradiction.
Consequently $z_1=0$, which means that all the roots can be obtained by solving the equation over the field $F[x]$.

\bigskip
\textbf{Case 2}

Assume $\nu=n x$ for some Artin-Schreier element $x$ and some $n \in F^\times$.
As before $z=z_0+z_1$, and we obtain the system
\begin{eqnarray*}
I_0 & = & z_0^2+z_1^2+\nu=0\\
I_1 & = & z_0 z_1+z_1 z_0=0
\end{eqnarray*}

$I_1=b z_1=0$ which means that either $b=0$ or $z_1=0$.
If $z_1 \neq 0$ then $b=0$ and so $I_0=a^2+z_1^2+\nu=0$.
However, $I_0=I_{0,0}+I_{0,1} x$ and $I_{0,1}=n x=0$, which creates a contradiction.
Again, this equation can be solved simply over the field $F[x]=F[\nu]$.
\end{proof}

\section*{Acknowledgements}
I owe thanks to Jean-Pierre Tignol and Uzi Vishne for their help and support.

\section*{Bibliography}
\bibliographystyle{amsalpha}
\bibliography{bibfile}

\def\cprime{$'$}
\providecommand{\bysame}{\leavevmode\hbox to3em{\hrulefill}\thinspace}
\providecommand{\MR}{\relax\ifhmode\unskip\space\fi MR }
\providecommand{\MRhref}[2]{%
  \href{http://www.ams.org/mathscinet-getitem?mr=#1}{#2}
}
\providecommand{\href}[2]{#2}
\begin{thebibliography}{Woo85}

\bibitem[Abr09]{Abrate}
Marco Abrate, \emph{Quadratic formulas for generalized quaternions}, J. Algebra
  Appl. \textbf{8} (2009), no.~3, 289--306. \MR{2535990 (2010h:16038)}

\bibitem[Alb61]{Albert}
A.~Adrian Albert, \emph{Structure of algebras}, Revised printing. American
  Mathematical Society Colloquium Publications, Vol. XXIV, American
  Mathematical Society, Providence, R.I., 1961. \MR{0123587 (23 \#A912)}

\bibitem[Her56]{Herstein}
I.~N. Herstein, \emph{Conjugates in division rings}, Proc. Amer. Math. Soc.
  \textbf{7} (1956), 1021--1022. \MR{0082483 (18,557d)}

\bibitem[HS02]{HuangSo}
Liping Huang and Wasin So, \emph{Quadratic formulas for quaternions}, Appl.
  Math. Lett. \textbf{15} (2002), no.~5, 533--540. \MR{1889501 (2003d:12003)}

\bibitem[MW05]{Maclagan-Wedderburn}
J.~H. Maclagan-Wedderburn, \emph{A theorem on finite algebras}, Trans. Amer.
  Math. Soc. \textbf{6} (1905), no.~3, 349--352. \MR{1500717}

\bibitem[Sch85]{Scharlau}
Winfried Scharlau, \emph{Quadratic and {H}ermitian forms}, Grundlehren der
  Mathematischen Wissenschaften [Fundamental Principles of Mathematical
  Sciences], vol. 270, Springer-Verlag, Berlin, 1985. \MR{770063 (86k:11022)}

\bibitem[Woo85]{Wood}
R.~M.~W. Wood, \emph{Quaternionic eigenvalues}, Bull. London Math. Soc.
  \textbf{17} (1985), no.~2, 137--138. \MR{806238 (86m:15013)}

\end{thebibliography}
\end{document}